\newcommand{\Z}{\mathbb{Z}}
\newcommand{\R}{\mathbb{R}}
\newcommand{\G}{\mathscr{G}}
\newcommand{\F}{\mathscr{F}}
\newtheorem{theorem}{Theorem}
\newtheorem{proposition}[theorem]{Proposition}
\theoremstyle{definition}
\newtheorem*{conjecture}{Conjecture}
\title{On freeness of the random fundamental group}
\author{Andrew Newman \thanks{The Ohio State University}}
\date{\today}
\begin{document}
\maketitle
\begin{abstract}
Let $Y(n, p)$ denote the probability space of random 2-dimensional simplicial complexes in the Linial--Meshulam model, and let $Y \sim Y(n, p)$ denote a random complex chosen according to this distribution. In a paper of Cohen, Costa, Farber, and Kappeler, it is shown that for $p = o(1/n)$ with high probability $\pi_1(Y)$ is free. Following that, a paper of Costa and Farber shows that for values of $p$ which satisfy $3/n < p \ll n^{-46/47}$, with high probability $\pi_1(Y)$ is not free. Here we improve on both of these results to show that there are explicit constants $\gamma_2 < c_2 < 3$, so that for $p < \gamma_2/n$ with high probability $Y$ has free fundamental group and that for $p > c_2/n$, with high probability $Y$ has fundamental group which either is not free or is trivial.
\end{abstract}
\section{Introduction}
For positive integers $n$ and $d$ and $p = p(n) \in [0, 1]$, the space of Linial--Meshulam random $d$-dimensional simplicial complexes, first introduced in \cite{LM} and \cite{MW} and denoted $Y_d(n, p)$, is defined to be the probability space of $d$-dimensional simplicial complexes on $n$ vertices with complete $(d - 1)$-skeleton where each of the possible $\binom{n}{d + 1}$ $d$-dimensional faces is included independently with probability $p$. Here we are primarily interested in the $d = 2$ case and so we suppress the dimension parameter and write $Y(n, p)$ for $Y_2(n, p)$. Now the question of the fundamental group of $Y \sim Y(n, p)$  is nontrivial and  has been studied in \cite{BHK, CCFK, CF, HKP}. Additionally the series of papers \cite{AL, AL2, ALLM, LP}, study $Y_d(n, p)$ in the regime $p = c/n$. We will describe these results below, but we introduce two constants first introduced in \cite{ALLM} and \cite{AL} that are needed to state our main theorem. Let $\gamma_2 = (2x(1 - x))^{-1}$ where $x$ is the unique nonzero solution to $\exp(-\frac{1 - x}{2x}) = x$ and let $c_2 = \frac{-\log y}{(1 - y)^2}$ where $y$ is the unique root in $(0, 1)$ of $3(1 - y) + (1 + 2y) \log y = 0$. Here we build on the work of \cite{ALLM}, \cite{CF}, and \cite{LP} to prove the following result about the fundamental group of a random 2-complex. Note that most of the theorems stated here are asymptotic results and we use the phrase ``with high probability'', abbreviated w.h.p., to mean that a property holds for $Y \sim Y_d(n, p)$ with probability tending to 1 as $n$ tends to infinity.  The following theorem is the main result of this paper.
\begin{theorem}
 If $c < \gamma_2$ and $Y \sim Y(n, c/n)$, then with high probability $\pi_1(Y)$ is a free group and if $c > c_2$ and $Y \sim Y(n, c/n)$ then with high probability $\pi_1(Y)$ is not a free group. 
\end{theorem}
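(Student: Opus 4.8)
The plan is to treat the two directions separately, deriving freeness from collapsibility and non-freeness from a homological obstruction. Throughout I use that the $1$-skeleton of $Y$ is complete, so $Y$ is connected with no hypotheses needed.

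For the first statement, suppose $c < \gamma_2$. The constant $\gamma_2$ is exactly the $2$-collapsibility threshold of \cite{ALLM}, so w.h.p. $Y$ is $2$-collapsible: there is a sequence of elementary collapses, each deleting an edge that lies in a unique triangle together with that triangle, after which no triangle remains. Each elementary collapse is a simple-homotopy equivalence, and the terminal complex is a subcomplex of dimension at most one, i.e. a graph $G$. Hence $Y \simeq G$ and $\pi_1(Y) \cong \pi_1(G)$ is free. This direction is immediate once the collapsibility result is quoted; the only thing to check is that the collapses are homotopy equivalences, which is standard.

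For the second statement I would exploit that a free group $F$ has cohomological dimension at most one, so $H_2(F;\mathbb{Z}) = 0$ (Stallings--Swan). Via the Hopf sequence $\pi_2(Y) \xrightarrow{\,h\,} H_2(Y;\mathbb{Z}) \to H_2(\pi_1(Y);\mathbb{Z}) \to 0$, the group $H_2(\pi_1(Y))$ is the cokernel of the Hurewicz map $h$, namely the quotient of $H_2(Y)$ by the spherical classes, i.e. those lifting to the universal cover $\widetilde{Y}$. Thus it suffices to produce, w.h.p., a $2$-cycle of $Y$ that is not spherical: any such cycle certifies $H_2(\pi_1(Y)) \neq 0$, hence that $\pi_1(Y)$ is neither free nor trivial. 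Working over $\mathbb{Q}$, this reduces to showing that $h$ is not surjective, i.e. that $\dim_{\mathbb{Q}} H_2(Y;\mathbb{Q})$ strictly exceeds the dimension of the spherical subspace.

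To finish I would bound these two quantities from opposite sides. The threshold $c_2$ is precisely the value above which \cite{AL} forces the top homology not to vanish, giving w.h.p. a large lower bound on $b_2(Y) = \dim_{\mathbb{Q}} H_2(Y;\mathbb{Q})$ (for $c > 3$ one already has $b_2(Y) = \Theta(n^2)$, which is the regime reached in \cite{CF}, and the new content is to push the count down to $c_2$). For the upper bound, the point is that a spherical class is carried by the image of a map from a $2$-sphere, and by the local-structure analysis of \cite{LP} the sphere-like configurations occurring w.h.p. in the $2$-core are of bounded size and sporadic, so they span only a subspace of dimension $o(b_2(Y))$; comparing the bounds then yields a non-spherical class. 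The main obstacle is exactly this control of the spherical subspace. A large second homology does not by itself obstruct freeness, since triangulated spheres — for instance boundaries of $3$-simplices — contribute to $H_2(Y)$ yet are spherical and harmless; so the crux is to show that above $c_2$ the second homology genuinely cannot be accounted for by spheres alone. Combining the homology-emergence estimates of \cite{AL} with the fine description of the $2$-core from \cite{LP} to establish this is the technically hardest step.
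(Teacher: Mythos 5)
Your first direction has a genuine gap: $\gamma_2$ is \emph{not} a threshold for $2$-collapsibility in the sense you use it. What \cite{ALLM} proves is that for $c < \gamma_2$, w.h.p.\ $Y$ is $2$-collapsible \emph{or contains the boundary of a tetrahedron}, and the second alternative cannot be discarded: the number of copies of $\partial \Delta_3$ is asymptotically Poisson with mean $c^4/24$, so with probability bounded away from $0$ the complex contains a tetrahedron boundary, which is a core and therefore survives every sequence of elementary collapses. Hence ``w.h.p.\ $Y$ is $2$-collapsible'' is false for every constant $c > 0$, and your argument does not cover the event that $\partial \Delta_3$ occurs. The paper repairs this with an extra deterministic step (its Proposition 5): if every core of $Y$ contains a tetrahedron boundary and all tetrahedron boundaries are pairwise face-disjoint, then one may delete one face from each $\partial \Delta_3$ without changing $\pi_1$ (fill each with a $3$-simplex and collapse it away), and the resulting complex has no core, hence is $2$-collapsible. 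The face-disjointness holds w.h.p.\ by a first-moment computation ($O(n^5 p^7) = o(1)$), and ``every core contains a $\partial \Delta_3$'' follows from combining Theorems 4.1 and 5.3 of \cite{ALLM}. Some version of this extra step is unavoidable.

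Your second direction identifies the right obstruction ($H_2(\pi_1(Y)) \neq 0$ via the Hopf sequence) but leaves the decisive step unproved: you propose to bound the dimension of the spherical subspace of $H_2(Y;\Q)$ by an unspecified ``local-structure analysis of \cite{LP}'' showing that sphere-like configurations are sporadic. No such statement is available in \cite{LP}, and you yourself flag this as the hardest step. The paper avoids it entirely by quoting the asphericability theorem of Costa and Farber (\cite{CF}): for $p \ll n^{-46/47}$, w.h.p.\ the complex $Z$ obtained from $Y$ by deleting one face from each tetrahedron boundary is aspherical, so $\pi_2(Z) = 0$, the Hurewicz map vanishes, and \emph{all} of $H_2(Z)$ descends to $H_2(\pi_1(Z)) = H_2(\pi_1(Y))$. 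It then remains only to check $\beta_2(Z) > 0$: by \cite{LP}, $\beta_2(Y) = \Theta(n^2)$ w.h.p.\ for $c > c_2$; w.h.p.\ at most $n$ tetrahedron boundaries are present (Markov applied to the expected count $c^4/24$); and each face deletion lowers $\beta_2$ by at most one, so $\beta_2(Z) \geq \Theta(n^2) - n > 0$. Replacing your spherical-subspace estimate with this asphericity input is exactly what turns your sketch into a complete proof.
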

Now $\gamma_2$ is first defined in \cite{ALLM} and $c_2$ is first defined in \cite{AL} and approximations are computed as $\gamma_2 \approx 2.455407$ and $c_2 \approx 2.753806$.
\section{The lower bound}
In this section we prove the first part of theorem that for $c < \gamma_2$ one has $\pi_1(Y)$ is a free group with high probability for $Y \sim Y(n, c/n)$. This result will follow by adapting the argument of \cite{ALLM} used to prove the following result.
\begin{theorem}[$2$-dimensional case of Theorem 1.4 from \cite{ALLM}]
Let $\gamma_2$ be as above. If $c < \gamma_2$ then w.h.p.\ $Y \sim Y(n, c/n)$ is $2$-collapsible or contains $\partial \Delta_{3}$ as a subcomplex.
\end{theorem}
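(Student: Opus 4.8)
The plan is to reduce the statement to an analysis of the collapse (peeling) process and to isolate $\partial\Delta_3$ as the unique small obstruction to collapsibility. First I would run the collapse greedily: repeatedly delete a \emph{free edge} (a $1$-simplex lying in exactly one triangle) together with its unique triangle, until no free edge remains. The terminal complex, the $2$-core $\widehat{Y}$, is independent of the order of deletions by the usual confluence argument for peeling processes, and $Y$ is $2$-collapsible precisely when $\widehat{Y}$ contains no triangle. Thus it suffices to show that for $c < \gamma_2$, w.h.p.\ the $2$-core is either triangle-free or contains a copy of $\partial\Delta_3$.

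The core of the argument is to show that below $\gamma_2$ the peeling removes almost everything, leaving a core of bounded size. Locally, $Y(n,c/n)$ around a fixed edge converges to a Poisson branching structure: the number of triangles on an edge is asymptotically $\mathrm{Poisson}(c)$, and each such triangle contributes two further edges carrying independent copies of the same offspring law. An edge survives the peeling iff, in this limiting tree, it is never forced to be removed; writing $x$ for the asymptotic probability that a given edge is \emph{not} collapsed leads to a distributional fixed-point equation whose boundary case is exactly $\exp(-\tfrac{1-x}{2x}) = x$, the equation defining $\gamma_2$. For $c < \gamma_2$ this recursion admits only the trivial fixed point, so the expected number of surviving triangles is a naive $o(n^2)$; tracking the process with Wormald's differential-equation method (or via local weak convergence together with a concentration estimate) upgrades this to the statement that w.h.p.\ the surviving core has only $O(1)$ triangles and each of its connected components has bounded size.

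With the core reduced to finitely many bounded configurations, I would finish with a first-moment computation over small non-collapsible subcomplexes. For a connected $2$-complex $S$ with $V$ vertices and $T$ triangles in which every edge lies in at least two triangles, the link of each vertex has minimum degree at least $2$ and hence at least three edges, so each vertex lies in at least three triangles; summing over vertices gives $3T \ge 3V$, i.e.\ $V \le T$, with equality forcing every link to be a single triangle and hence $S \cong \partial\Delta_3$. Since the $1$-skeleton is complete, the expected number of copies of such an $S$ in $Y(n,c/n)$ is of order $n^{V-T}$: this is $\Theta(1)$ (in fact asymptotically Poisson) when $S = \partial\Delta_3$ and $O(n^{-1})$ for every other admissible $S$. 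Hence w.h.p.\ the only bounded non-collapsible subcomplexes present are copies of $\partial\Delta_3$, and any nonempty core must contain one.

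The main obstacle is the second step: making the heuristic ``per-edge survival probability $=0$ implies a bounded core'' rigorous and pinning the threshold exactly at $\gamma_2$. Because the peeling dynamics are non-monotone and the collapse transition is discontinuous, one must control the fluctuations of the process rather than merely its expected trajectory, and verify that $c < \gamma_2$ is precisely the regime in which the fixed-point recursion has no nontrivial solution; this threshold computation, together with the passage from a sublinear to a bounded core, is the technical heart of the argument.
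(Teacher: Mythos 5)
Your overall architecture---peel greedily to the $2$-core, then show a nonempty core must contain a tetrahedron boundary---is the right one, and it matches how the result is actually assembled: the paper does not reprove this theorem but records the two ingredients from \cite{ALLM} as its Theorems 3 and 4, namely (i) for $c < \gamma_2$, w.h.p.\ $f_2(R_{\infty}(Y)) \leq \delta n^2$, and (ii) for every $c > 0$ there is a $\delta = \delta(c) > 0$ so that w.h.p.\ every core subcomplex with at most $\delta n^2$ faces contains $\partial\Delta_3$. Your local analysis of the peeling process leading to the fixed-point equation $\exp(-\frac{1-x}{2x}) = x$ is the correct heuristic behind (i), and your link argument showing that a core with $V$ vertices and $T$ triangles satisfies $T \geq V$, with equality only for $\partial\Delta_3$, is correct and is the right starting point for (ii).

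The genuine gap is the step where you ``upgrade'' the differential-equation / local-weak-limit analysis to the claim that w.h.p.\ the surviving core has only $O(1)$ triangles with bounded components. Those methods track densities and deliver only that the core has $o(n^2)$ faces (this is exactly the content of the paper's Theorem 4); they do not give a bounded core, and your first-moment computation is then carried out only over complexes $S$ of bounded size, i.e.\ over finitely many isomorphism types. What is actually required---and what constitutes the technical heart of Theorem 4.1 of \cite{ALLM}---is a union bound over candidate cores of \emph{all} sizes from $O(1)$ up to $\delta n^2$: one must control the number of isomorphism types of cores with $V$ vertices and $T \geq V+1$ triangles for every $T$ in this range and show that the total expectation, roughly $\sum_T (\text{number of types}) \cdot c^T n^{V-T}$, is still $o(1)$; this is where the dependence of $\delta$ on $c$ enters and why the statement carries an explicit $\delta(c)$. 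As written, your sketch silently assumes the hard part (boundedness of the core), and the inequality $T \geq V$, while correct, only disposes of the finitely many bounded configurations rather than the full range of subquadratic ones.
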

We first define what it means for a simplicial complex to be $d$-collapsible. For a $d$-dimensional simplicial complex $Y$, we say that a $(d-1)$-dimensional face $\tau$ is \emph{free} if it is contained in exactly one $d$-dimensional face $\sigma \in Y$. For a free $(d - 1)$-face $\tau$ an \emph{elementary collapse} at $\tau$ is defined to be the simplicial complex $Y'$ obtained from $Y$ be removing $\tau$ and the unique $d$-face $\sigma$ in $Y$ containing $\tau$. If there is a sequence of elementary collapses that removes all $d$-dimensional faces of $Y$ we say that $Y$ is $d$-collapsible. Observe that elementary collapses are homotopy equivalences, so if a 2-complex is 2-collapsible (to a graph) then in particular it has free fundamental group. Therefore theorem 2 above almost proves the lower bound except for the problem of tetrahedron boundaries. Note that it is impossible to rule out $\partial \Delta_3$ appearing as a subcomplex of $Y \sim Y(n, c/n)$ for any $c > 0$ since the expected number of copies of $\partial \Delta_{3}$ in $Y \sim Y(n, c/n)$ approaches a Poisson distribution with mean $c^{4}/24$. Additionally, \cite{ALLM} does not state any result about partial collapsibility in the presence of a few copies of $\partial \Delta_{3}$ and indeed it is not clear that any partial collapsibility result would hold. However such a result is not needed to imply that the fundamental group of $Y \sim Y(n, c/n)$ is free for $c < \gamma_2$ as we will see below.\\

Following the convention of \cite{ALLM} define a \textit{core} to be a $2$-dimensional simplicial complex in which every edge is contained in at least two faces. Also for a $2$-complex $Y$, let  $R(Y)$ denote the simplicial complex obtained by collapsing all the free edges of $Y$ and let $R_{\infty}(Y)$ denote the simplicial complex obtained after repeatedly collapsing at all free edges until no free edges remain. The two key results of \cite{ALLM} that we will use are the following.
\begin{theorem}[2-dimensional case of Theorem 4.1 from \cite{ALLM}]
For every $c > 0$ there exists a constant $\delta = \delta(c) > 0$ such that w.h.p.\ every core subcomplex $K$ of $Y \sim Y(n, c/n)$ with $f_2(K) \leq \delta n^2$ must contain the boundary of a tetrahedron.
\end{theorem}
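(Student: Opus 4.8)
The plan is to prove this by a first-moment (union bound) argument. Let $X$ denote the number of subcomplexes $K \subseteq Y$ that are cores, contain no copy of $\partial \Delta_3$, and satisfy $f_2(K) \le \delta n^2$. I would bound $\mathbb{E}[X]$ and show it tends to $0$, so that $\mathbb{P}(X \ge 1) \le \mathbb{E}[X] \to 0$ by Markov's inequality. Since every connected component of a tetrahedron-free core is itself a connected tetrahedron-free core, it suffices to bound the expected number of \emph{connected} such cores: any bad core contains at least one of these as a component, so their w.h.p.\ absence yields the theorem. Throughout I organize the union bound by the face vector $(f_0, f_1, f_2) = (v, E, t)$ of the core.

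The structural heart of the argument is an inequality among $v$, $E$, and $t$ coming from the defining condition. For a vertex $w$ of a core $K$, the condition that every edge is in at least two faces says exactly that every vertex of the link of $w$ has degree at least $2$; hence the link is a graph of minimum degree $\ge 2$ on $\deg_K(w)$ vertices, which forces $\deg_K(w) \ge 3$ and gives at least $\deg_K(w)$ edges in the link, i.e.\ at least $\deg_K(w)$ faces containing $w$. Writing $t_w$ for the number of faces at $w$ and summing, I obtain
\[
3t \;=\; \sum_w t_w \;\ge\; \sum_w \deg_K(w) \;=\; 2E \;\ge\; 3v ,
\]
so $t \ge v$, with equality only when every vertex has degree exactly $3$ and link a triangle; tracing this through shows each component must then be a single $\partial\Delta_3$. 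This serves two purposes: it singles out $\partial\Delta_3$ as the unique exceptional core (explaining why it must be excluded), and it shows every connected tetrahedron-free core has positive \emph{excess} $g := t - v \ge 1$. Consequently the factor $\binom{n}{v}\,p^{t}$ attached to each core is of order $n^{v-t} = n^{-g}$ up to constants, carrying a genuine $n^{-1}$ saving per unit of excess.

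The main obstacle is the counting: I need a usable upper bound for the number of connected cores on $[n]$ with a prescribed face vector $(v,E,t)$. The naive estimate---choose the $v$ vertices and then choose which of the roughly $\binom{v}{3}$ triples are faces---is hopeless, since it produces a super-exponential factor of order $t^{\Theta(t)}$ that overwhelms the $n^{-g}$ gain no matter how large $g$ is. The fix must exploit the hypothesis that every edge lies in at least two faces: this forces $E \le 3t/2$, so the $t$ faces multiply-cover a small edge set and cannot be spread out freely. The right estimate is therefore an entropy/exploration bound, building the core by adding faces in sequence and using that each new face reuses previously revealed edges, which should reduce the count of cores to $\exp(O(t))$ with a base that nonetheless grows with the core size. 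Obtaining this bound, and tracking how its base depends on $t$, is where I expect essentially all of the difficulty to lie.

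Finally I would assemble the sum $\mathbb{E}[X] \le \sum_{t \le \delta n^2} \big(\#\text{connected tetrahedron-free cores with }t\text{ faces}\big)\,p^{t}$, in which the per-size contribution behaves like an increasing function of $t$ once the entropy bound above is inserted. The role of $\delta$ is then transparent: one chooses $\delta = \delta(c)$ small enough that this summand stays strictly below $1$ throughout the range $t \le \delta n^2$, after which the whole sum is dominated by its smallest terms and tends to $0$. The restriction to $t \le \delta n^2$ is not merely technical but necessary, since past the relevant threshold macroscopic cores genuinely do appear and cannot be excluded; the content of the statement is precisely that \emph{small} cores are tetrahedron-free with high probability, and the delicate point is verifying that the increasing summand has not yet crossed $1$ by the time $t$ reaches $\delta n^2$.
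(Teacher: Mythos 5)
The paper does not actually prove this statement: it is imported verbatim as the $2$-dimensional case of Theorem 4.1 of \cite{ALLM}, so there is no internal proof to compare against and your attempt has to be judged on its own terms. Your framework (a first-moment bound over connected tetrahedron-free cores, organized by face vector) and your structural lemma are both sound: in a core every vertex link is a graph of minimum degree at least $2$, whence $3t = \sum_w t_w \ge \sum_w \deg_K(w) = 2E \ge 3v$, and equality forces every link to be a triangle, which does pin down $\partial\Delta_3$ as the unique connected core with $t = v$. The problem is that the decisive step --- the enumeration of cores with a given face vector --- is left entirely open, and the candidates you gesture at can be checked to fail: an exploration that attaches each new face along a previously revealed edge costs a factor of order $t \cdot n$ per face (giving $(3tc)^t$ after multiplying by $p^t$), while bounding the faces by triangles of the at most $3t/2$-edge $1$-skeleton via Kruskal--Katona costs a factor of order $\sqrt{t}$ per face on top of the $\binom{\binom{v}{2}}{E}$ choices of skeleton; in either case the product with $\binom{n}{v}p^t$ diverges long before $t$ reaches $\delta n^2$.

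More seriously, the quantitative skeleton cannot close even if an enumeration of the advertised shape were supplied. An excess of $g = t - v \ge 1$ buys a single factor of $n^{-1}$ per component, so any enumeration cost $A^t$ with $Ac > 1$ already makes the term $(Ac)^t n^{-g}$ diverge once $t \gg \log n$; a per-component saving of $n^{-1}$ simply cannot fight an exponential-in-$t$ entropy over a range extending to $t = \delta n^2$. What is actually needed --- and what your lemma does not deliver --- is an excess \emph{linear} in $t$. This is true for tetrahedron-free cores, but it requires a finer argument: a vertex $w$ with exactly three faces has link a triangle $abc$, and tetrahedron-freeness forces $abc \notin K$; if a neighbor $a$ of $w$ also had exactly three faces, its link would be the triangle $wbc$, forcing $abc \in K$ and hence $\partial\Delta_3 \subseteq K$. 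So the degree-three vertices form an independent set meeting each face at most once, giving $|W| \le t/3$ and $3t \ge 3|W| + 4(v - |W|)$, hence $v \le \frac{5}{6}t$ and a saving of $n^{-t/6}$. Only a gain of this strength has a chance against the enumeration, and the enumeration itself still has to be done with care; these two missing steps are precisely the content of Theorem 4.1 of \cite{ALLM}.
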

\begin{theorem}[2-dimensional case of Theorem 5.3 from \cite{ALLM}]
Let $\delta > 0$ and $0 < c < \gamma_2$ be fixed and suppose $Y \sim Y(n, c/n)$. Then w.h.p.\ $f_2(R_{\infty}(Y)) \leq \delta n^2$.
\end{theorem}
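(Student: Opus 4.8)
The plan is to deduce the bound from the corresponding statement for a \emph{bounded-depth} collapse, which is a local quantity, and then to let the depth tend to infinity. Write $R^{t}(Y)$ for the result of applying the operation $R$ (collapse every free edge once) exactly $t$ times, so that $R_\infty(Y)$ is obtained from $R^{t}(Y)$ by further elementary collapses. Since each elementary collapse only removes $2$-faces, we have the deterministic domination
\[
f_2(R_\infty(Y)) \le f_2(R^{t}(Y)) \qquad \text{for every fixed } t.
\]
Taking expectations and using linearity over the $\binom{n}{3}$ potential triangles gives $\mathbb{E}[f_2(R^{t}(Y))] = \binom{n}{3}\tfrac{c}{n}\,\Pr[\sigma \in R^{t}(Y)]$ for a fixed triangle $\sigma$. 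The key point is that membership of $\sigma$ in $R^{t}(Y)$ depends only on the radius-$t$ neighborhood of $\sigma$ in the graph whose vertices are the triangles of $Y$ and whose adjacencies are shared edges: $\sigma$ is deleted in round one iff one of its edges lies in no other triangle, in round two iff after round one one of its edges is free, and so on. Hence $\Pr[\sigma \in R^{t}(Y)]$ converges, as $n \to \infty$, to the probability $\epsilon_t$ that the root survives $t$ rounds of the same collapse in the local weak limit of $Y(n,c/n)$.

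That local limit is the two-type Galton--Watson tree in which a triangle has three incident edges, each edge has $\mathrm{Poisson}(c)$ further incident triangles, and each such triangle has two new incident edges, continuing alternately; the Poisson parameter arises because a fixed edge $\{i,j\}$ lies in $\mathrm{Binomial}(n-2, c/n) \to \mathrm{Poisson}(c)$ triangles. The collapse is analyzed by message passing on this tree. Let $s$ denote the probability that an edge reached from its parent triangle is eventually supported, meaning it retains at least one child triangle through the full collapse, and note that a child triangle is retained iff both of its other edges are supported, which happens with probability $s^2$. Since the number of supported child triangles of an edge is $\mathrm{Poisson}(cs^2)$, the edge is supported iff this count is positive, giving the fixed-point equation
\[
s = 1 - e^{-c s^2}.
\]
Writing $\Phi(s) = 1 - e^{-cs^2}$, the finite-depth survival probabilities are governed by the iterates $s_0 = 1$, $s_{t+1} = \Phi(s_t)$, which decrease to the largest fixed point of $\Phi$, and one checks that $\epsilon_t \le s_t^{3}$ tends to the cube of that fixed point.

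It remains to show that for $c < \gamma_2$ the only fixed point of $\Phi$ in $[0,1]$ is $0$, so that $s_t \to 0$ and hence $\epsilon_t \to 0$. Since $\Phi$ is increasing with $\Phi(s) \sim cs^2$ near $0$, a nonzero fixed point first appears exactly when the line $y = s$ becomes tangent to $y = \Phi(s)$; solving $\Phi(s) = s$ together with $\Phi'(s) = 2cs\,e^{-cs^2} = 1$ yields $1 - s = \exp\!\big(-\tfrac{s}{2(1-s)}\big)$ and $c = \tfrac{1}{2s(1-s)}$, which upon the substitution $x = 1 - s$ is precisely the defining condition for $\gamma_2$. Thus for $c < \gamma_2$ one has $\Phi(s) < s$ on $(0,1]$, so $s_t \downarrow 0$ and $\epsilon_t \to 0$.

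Combining this with the domination above, $\limsup_n \mathbb{E}[f_2(R_\infty(Y))]/n^2 \le c\,\epsilon_t/6$ for every $t$, and letting $t \to \infty$ gives $\mathbb{E}[f_2(R_\infty(Y))] = o(n^2)$; Markov's inequality then yields $f_2(R_\infty(Y)) \le \delta n^2$ w.h.p.\ for each fixed $\delta > 0$. Note that no concentration inequality is needed, since the deterministic bound $f_2(R_\infty) \le f_2(R^{t})$ converts a control on the mean of a bounded-depth quantity into the desired statement for the full core. I expect the main obstacle to be the local-convergence step: one must show rigorously that for fixed $t$ the radius-$t$ triangle neighborhood of $\sigma$ is tree-like with high probability and that its law converges to the branching process, which requires ruling out short cycles (pairs of triangles sharing two edges, or edges that re-close) and controlling the convergence of the relevant Poisson counts uniformly over the bounded number of collapse rounds.
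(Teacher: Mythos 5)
The paper itself offers no proof of this statement---it is imported verbatim as Theorem 5.3 of \cite{ALLM}---so the only meaningful comparison is with the proof in that reference, and your sketch follows essentially the same route. Bounding $f_2(R_\infty(Y))$ by the bounded-depth quantity $f_2(R^t(Y))$, passing to the Poisson$(c)$ local limit, running the collapse recursion $s_{t+1}=1-e^{-cs_t^2}$ (which is ALLM's recursion $x_{t+1}=e^{-c(1-x_t)^2}$ under $x=1-s$), identifying the tangency condition of the fixed-point equation with the defining equations for $\gamma_2$, and closing with the first-moment method and Markov is exactly the mechanism there; your tangency computation does recover $\gamma_2=(2x(1-x))^{-1}$ with $e^{-(1-x)/(2x)}=x$, and your remark that the order of limits ($n\to\infty$ for fixed $t$, then $t\to\infty$) removes any need for a concentration inequality is the right observation. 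Two small points: the identity should read $\mathbb{E}[f_2(R^t(Y))]=\binom{n}{3}\Pr[\sigma\in R^t(Y)]$, with the factor $c/n$ arising only after conditioning on $\sigma\in Y$, so $\epsilon_t$ must be the conditional survival probability; and the local-convergence step you flag at the end---showing the radius-$t$ triangle neighborhood is w.h.p.\ tree-like with the right limiting law, uniformly enough to compare the finite-depth collapse on $Y$ with the one on the branching process---is not a routine afterthought but is where essentially all of the technical work in \cite{ALLM} is concentrated, so the sketch is an outline of their argument rather than a complete alternative proof.
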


Now to bound the probability that $\pi_1(Y)$ is not a free group for $Y \sim Y(n, c/n)$, we will bound the probability that $Y \sim Y(n, c/n)$ for $c < \gamma_2$ has a core which contains no tetrahedron boundary or has a pair of tetrahedron boundaries that are not face disjoint. This will be an upper bound to the probability that $\pi_1(Y)$ is not free by the following proposition.
\begin{proposition}
Let $Y$ be a $2$-dimensional simplicial complex. If every core of $Y$ contains a tetrahedron boundary and all the tetrahedron boundaries are face-disjoint then $\pi_1(Y)$ is free
\end{proposition}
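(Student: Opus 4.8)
The plan is to exhibit a sequence of $\pi_1$-preserving simplifications that reduce $Y$ to a graph, whose fundamental group is automatically free. I would use two elementary moves. Move (A) is an elementary collapse at a free edge; this is a homotopy equivalence and hence preserves $\pi_1$. Move (B) is the deletion of a single $2$-face $\sigma$ that belongs to a subcomplex $T \cong \partial\Delta_3$ of $Y$. To see that (B) preserves $\pi_1$, write $T$ on vertices $\{a,b,c,d\}$ with $\sigma = bcd$; the three remaining faces $abc$, $abd$, $acd$ form a disk $D$ (a cone with apex $a$) whose boundary is exactly the triangle $bcd$. Thus the attaching loop of the $2$-cell $\sigma$ bounds $D \subseteq Y \setminus \sigma$ and is null-homotopic there, so by van Kampen's theorem $\pi_1(Y) = \pi_1(Y\setminus\sigma)/\langle\langle\,\partial\sigma\,\rangle\rangle = \pi_1(Y\setminus\sigma)$.

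With these moves in hand I would induct on the number $m(Y)$ of $2$-faces in the maximal core of $Y$ (equivalently, in $R_\infty(Y)$). If $m(Y)=0$ then $R_\infty(Y)$ is a graph, and since the collapses of move (A) are homotopy equivalences, $\pi_1(Y)\cong\pi_1(R_\infty(Y))$ is free. If $m(Y)>0$, let $C$ be the maximal core of $Y$; it is a nonempty core, so by hypothesis it contains a tetrahedron boundary $T$. Choose a face $\sigma$ of $T$ and set $Y' = Y\setminus\sigma$, so that move (B) gives $\pi_1(Y')=\pi_1(Y)$. The crucial observation is that the hypotheses pass to $Y'$: any core of $Y'$ is a subcomplex of $Y$ in which every edge lies in at least two of its own faces, hence is a core of $Y$ and contains a tetrahedron boundary, and the tetrahedron boundaries of $Y'$ form a subfamily of those of $Y$, so they remain face-disjoint. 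Finally the maximal core of $Y'$ is a core of $Y$ that avoids $\sigma$, hence is contained in $C\setminus\{\sigma\}$, giving $m(Y')\le m(Y)-1$. By induction $\pi_1(Y')$ is free, and therefore so is $\pi_1(Y)=\pi_1(Y')$.

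The step demanding the most care is move (B): the verification that the attaching circle of the deleted face is null-homotopic rests on the specific geometry of $\partial\Delta_3$, namely that deleting one of its four faces leaves a disk bounded by the fourth triangle. The conceptual crux, however, is the inheritance of the hypothesis along the induction --- the remark that a core of a subcomplex of $Y$ is again a core of $Y$ --- since this is what guarantees a fresh tetrahedron boundary is available after each deletion and drives $m(Y)$ to zero. The face-disjointness hypothesis is the one that is verified in the probabilistic setting, and it secures the complementary geometric picture in which the $2$-dimensional core is assembled from $2$-spheres meeting only along their $1$-skeleta; this underlies the intuition that $Y$ behaves like a wedge of circles and $2$-spheres, from which freeness is equally transparent, and it is there that face-disjointness plays its natural role.
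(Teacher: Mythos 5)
Your proof is correct, and it reaches the same endpoint as the paper --- delete one face from a tetrahedron boundary inside the core, observe that the hypotheses are inherited and the core shrinks, and finish with ``no core implies $2$-collapsible implies free $\pi_1$'' --- but the justification of the key step is genuinely different. The paper handles the face deletions all at once and in the third dimension: it glues a $3$-simplex into each tetrahedron boundary (which does not change the $2$-skeleton, hence not $\pi_1$) and then collapses each $3$-simplex through a free $2$-face; this is exactly where face-disjointness is used, to guarantee that every face of every filled $3$-simplex is free. Your move (B) instead argues directly via van Kampen that the attaching circle of the deleted face bounds the disk formed by the other three faces of $\partial\Delta_3$, so the deletion kills nothing in $\pi_1$. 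This is more elementary (no excursion into dimension $3$) and, as you half-notice in your last paragraph, it never actually uses face-disjointness: move (B) and the inheritance argument go through verbatim even when tetrahedron boundaries overlap, so your induction proves the proposition with the face-disjointness hypothesis deleted. (That hypothesis is still needed elsewhere in the paper's probabilistic argument only because the author's proof of this proposition relies on it; your version would let one drop the $O(n^5p^7)$ computation for overlapping tetrahedron boundaries.) The one point worth stating explicitly in your write-up is the standard fact that the union of all cores is a core, so that ``the maximal core'' $C$ is well defined and every core of $Y'$ lies inside $C$; you use this silently when concluding $m(Y')\le m(Y)-1$.
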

\begin{proof}
Let $\tilde{Y}$ be the 3-dimensional simplicial complex obtained from $Y$ be adding a 3-simplex inside all the tetrahedron boundaries of $Y$. Now $\tilde{Y}^{(2)} = Y$ so $\pi_1(Y) = \pi_1(\tilde{Y})$. Now let $Z$ be obtained from $\tilde{Y}$ by collapsing at a free 2-dimensional face at every 3-dimensional face, such a collapse will remove all the tetrahedra from $\tilde{Y}$ as the tetrahedron boundaries in $Y$ are face disjoint so every tetrahedron in $\tilde{Y}$ has that all of its faces are free. Equivalently, $Z$ is obtained from $Y$ by deleting one face from every tetrahedron boundary of $Y$. Now collapsing at free faces is a homotopy equivalence so $\pi_1(Z) = \pi_1(\tilde{Y})$. Furthermore $Z$ is 2-collapsible. Indeed $Z$ has no cores as a core $K$ of $Z$ would be a core in $Y$ as well since $Z$ is obtained from $Y$ by removing faces. But every core of $Y$ contains a tetrahedron boundary and $Z$ has no tetrahedron boundaries. Since $Z$ does not have a core it must be 2-collapsible, otherwise deleting all the isolated edges of $R_{\infty}(Z)$ would give us a subcomplex of $Z$ that has no faces of degree zero or one, so such a subcomplex would be a core. Thus $Z$ is 2-collapsible, in particular $Z$ is homotopy equivalent to a graph so $\pi_1(Z)$ is a free group.
\end{proof}
Now we are ready to prove the first part of Theorem 1, that is for $c < \gamma_2$ and $Y \sim Y(n, c/n)$, $\pi_1(Y)$ is a free group with high probability.
\begin{proof}[Proof of lower bound on Theorem 1]
Let $c < \gamma_2$ be fixed and suppose $Y \sim Y(n, c/n)$, by proposition 5, the probability that $\pi_1(Y)$ is not free is bounded above by the sum of the probability that $Y$ contains tetrahedron boundaries that share a face and the probability that $Y$ has a core with no tetrahedron boundary. First it is easy to bound the probability that $Y$ contains tetrahedron boundaries that share a face. Two tetrahedron boundaries in a simplicial complex sharing a face must meet in exactly one face. Two tetrahedron boundaries meeting at one face is a simplicial complex with 5 vertices and 7 faces, the expected number of such subcomplexes in $Y \sim Y(n, p)$ is $O(n^5 p^7)$ which in this case is $O(c^7/n^2) = o(1)$. So by Markov's inequality the probability that there are tetrahedron boundaries in $Y$ that are not face disjoint is $o(1)$.\\

We will now use the two theorems from \cite{ALLM} above to show that the probability that $Y$ has a core with no tetrahedron boundary is $o(1)$. Let $\delta = \delta(c)$ be the $\delta$ given by Theorem 3.  Let $\F$ denote the collection of $2$-complexes on $n$ vertices containing a core with no tetrahedron boundary and let $\G$ denote the collection of 2-complexes on $n$ vertices for which all cores have size at most $\delta n^2$. Note that if $Y \notin \G$, then $f_2(R_{\infty}(Y)) > \delta n^2$ since cores are unaffected by elementary collapses, so $Pr(Y \notin \G) = o(1)$ by Theorem 4. Now we bound $Pr(Y \in \F)$.
\begin{eqnarray*}
Pr(Y \in \F) &=& Pr(Y \in \F \cap \G) + Pr(Y \in \F \setminus \G) \\
&\leq& Pr(Y \in \F \cap \G) + Pr(Y \notin \G) \\
&\leq& Pr(Y \in \F \cap \G) + o(1) \\
&\leq& Pr(Y \in \{X : X \text{ has a core $K$ with at most $\delta n^2$ faces and no $\partial \Delta_3$}\}) + o(1)
\end{eqnarray*}
Now by the choice of $\delta$ and Theorem 3, we know that the probability that $Y$ has a core which has at most $\delta n^2$ faces but no tetrahedron boundary is $o(1)$. Thus we have that $Pr(Y \in \F) = o(1)$ which completes the proof.
\end{proof}
\section{The upper bound}
We now turn our attention to proving that when $c > c_2$ and $Y \sim Y(n, c/n)$, with high probability $\pi_1(Y)$ is not a free group. In fact relevant results by Costa and Farber (\cite{CF}) will prove that the cohomological dimension is 2. We refer the reader to \cite{Brown} for background on group cohomology theory. The main result of \cite{CF} is the following:
\begin{theorem}[Theorem 2 of \cite{CF}]
Assume that $p \ll n^{-46/47}$, then w.h.p.\ a random 2-complex $Y \sim Y(n, p)$ is asphericable. That is the complex $Z$ obtained from $Y$ by removing one face from each tetrahedron of $Y$ is aspherical (i.e. the universal cover of $Z$ is contractible).
\end{theorem}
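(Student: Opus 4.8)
The plan is to reduce asphericability of $Z$ to the single statement that $\pi_2(Z) = 0$, and then to show that the latter holds w.h.p.\ by a first moment argument. The reduction is legitimate because $Z$ is $2$-dimensional: its universal cover $\widetilde{Z}$ is a simply connected $2$-dimensional CW complex, so by the Hurewicz theorem $\pi_2(Z) \cong \pi_2(\widetilde{Z}) \cong H_2(\widetilde{Z})$, and $\dim \widetilde{Z} = 2$ forces $H_k(\widetilde{Z}) = 0$ for $k \geq 3$. Hence $\pi_2(Z) = 0$ would make $\widetilde{Z}$ acyclic and simply connected, and therefore contractible by Whitehead's theorem. Thus it suffices to prove that w.h.p.\ $\pi_2(Z) = 0$.

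To detect a nonzero element of $\pi_2$ I would invoke the combinatorial theory of spherical diagrams. If $\pi_2(Z) \neq 0$, then $Z$ is not diagrammatically reducible, and so there exists a nonempty \emph{reduced} spherical diagram: a combinatorial map $f \colon S \to Z$ where $S$ is a triangulated $2$-sphere and no two triangles of $S$ sharing an edge are folded by $f$ onto a single triangle of $Z$. Conversely, if $Z$ admits no nonempty reduced spherical diagram then every spherical diagram reduces to the empty one, $Z$ is diagrammatically reducible, and $\pi_2(Z) = 0$. I would therefore fix a reduced spherical diagram $f \colon S \to Z$ minimizing the number of triangles of $S$, and pass to its image $K = f(S)$, which is a subcomplex of $Z$ and hence of $Y$.

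The heart of the argument is a discrete Gauss--Bonnet estimate on $S$. Because $S$ is a sphere, its total combinatorial curvature is $4\pi$, while reducedness together with minimality rules out short reducible configurations and thereby bounds the local combinatorics of $S$ from below. This forces the existence of many low-degree vertices and in turn yields a lower bound on the number of faces of $K$ in terms of its number of vertices. Concretely, one shows that every minimal reduced spherical diagram produces a subcomplex $K \subseteq Y$ whose ratio $f_0(K)/f_2(K)$ is at most some explicit constant $\rho$. Since $Y$ has complete $1$-skeleton, the expected number of embedded copies of a fixed isomorphism type of $K$ on $v$ vertices with $f$ faces is $O(n^{v} p^{f})$, so summing over types and invoking $f_0(K)/f_2(K) \leq \rho$ makes the total expectation $o(1)$ precisely when $p \ll n^{-\rho}$. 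The threshold $p \ll n^{-46/47}$ is exactly the assertion that the extremal vertex-to-face ratio is $\rho = 46/47$, and Markov's inequality then gives $\pi_2(Z) = 0$ w.h.p.

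The hard part is the combinatorial optimization buried in the curvature estimate: one must classify the local structure of minimal reduced spherical diagrams over a simplicial complex tightly enough to pin down the extremal configuration realizing the supremal ratio $\rho$, since it is this extremal subcomplex that determines the explicit exponent $46/47$. A second delicate point is the bookkeeping around reduction itself, namely guaranteeing that a nontrivial $\pi_2$ class really yields a \emph{minimal nonempty reduced} diagram and that its image is counted correctly in the union bound without overcounting foldings or degenerate maps.
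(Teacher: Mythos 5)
This statement is quoted in the paper as Theorem 2 of \cite{CF} and is not proved there, so there is no in-paper argument to compare against; I am therefore judging your proposal on its own and against the Costa--Farber proof it implicitly reconstructs. Your opening reduction is correct and standard: for a $2$-dimensional complex, asphericity is equivalent to $\pi_2 = 0$ by Hurewicz and Whitehead applied to the universal cover, and the overall shape of your plan --- a deterministic statement that a nontrivial $\pi_2$ class forces a subcomplex of high face-to-vertex density, followed by a first moment computation at $p \ll n^{-46/47}$ --- is indeed the shape of the Costa--Farber argument. But the proposal is an outline rather than a proof, because both of its load-bearing components are asserted and deferred.

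First, the entire content of the theorem lives in the ``discrete Gauss--Bonnet estimate'' you postpone: the claim that the image $K$ of a minimal nonempty reduced spherical diagram in a tetrahedron-free complex satisfies $f_0(K)/f_2(K) \le 46/47$. Nothing in your sketch explains why this holds, and in particular nothing explains where the hypothesis that one face of each tetrahedron has been removed enters. It must enter somewhere: the boundary of a tetrahedron is itself a reduced spherical diagram with $f_0/f_2 = 1 > 46/47$, and its expected count $\Theta(n^4p^4)$ diverges throughout the range of interest, so the first moment bound as you state it fails on exactly that configuration. The passage from $Y$ to $Z$ exists precisely to excise this extremal case, and the hard combinatorial work is showing that once it is excised no other reduced sphere image exceeds ratio $46/47$; your proposal names this as ``the hard part'' but supplies none of it. Second, the union bound is not justified: for $p$ near $n^{-46/47}$ the complex has $\Theta(n^{2+1/47})$ faces, the image of a minimal sphere has no a priori size bound, and the number of isomorphism types of $2$-complexes with $f$ faces grows like $f^{\Theta(f)}$, which overwhelms the per-type bound $n^{v - 46f/47}$ unless one separately proves either that minimal violating subcomplexes have bounded size or that the deficit $46f/47 - v$ grows linearly in $f$. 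Together with the degenerate-map issue you flag but do not resolve (a nontrivial $\pi_2$ class must be converted into a non-degenerate reduced diagram whose image is a genuine subcomplex), these are genuine gaps: the strategy is the right one, but the proof is not there.
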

From here Costa and Farber prove the following result.
\begin{theorem}[Theorem 3B of \cite{CF}]
For any constants $c > 3$ and $0 < \epsilon < 1/47$ and $p$ satisfying $c/n < p < n^{-1 + \epsilon}$, the cohomological dimension of $Y \sim Y(n, p)$ equals $2$ with high probability. 
\end{theorem}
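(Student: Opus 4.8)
The plan is to deduce Theorem 8 from Theorem 7 together with a lower bound on the first Betti number of $Y$, using the standard cohomological characterization of asphericity for two-dimensional complexes. The key observation is that if a two-dimensional complex $Z$ is aspherical, then it is a $K(\pi_1(Z), 1)$, so the cohomological dimension of $\pi_1(Z)$ is at most $2$ and equals $2$ precisely when $H^2(Z)$ is nonzero. Since $Z$ is obtained from $Y$ by deleting one face from each tetrahedron boundary, and for the range $c/n < p < n^{-1+\epsilon}$ the number of tetrahedra is negligible compared to the total face count, we have $\pi_1(Z) = \pi_1(Y)$ and the relevant Betti numbers of $Z$ and $Y$ agree up to lower-order corrections.

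The main work is therefore to show that $H^2(Y; \Q) \neq 0$ w.h.p.\ when $p > c/n$ with $c > 3$, equivalently that $\beta_2(Y) = f_2(Y) - \mathrm{rank}(\partial_2) > 0$. First I would recall that for $Y \sim Y(n,p)$ the cohomological dimension of $\pi_1(Y)$ is $2$ if and only if $Y$ is aspherical and $H_2(Y) \neq 0$; Theorem 7 supplies asphericability for $p \ll n^{-46/47}$, so the burden reduces entirely to the homological statement. The natural tool is the Euler characteristic identity $\beta_2 - \beta_1 + \beta_0 = 1 - f_1 + f_2$, where $f_1 = \binom{n}{2}$ and $f_2 \sim p\binom{n}{3}$. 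For $p = c/n$ with $c > 3$ a direct computation gives $f_2 - f_1 = \binom{n}{3}\frac{c}{n} - \binom{n}{2} \sim (c/6 - 1/2)n^2 > 0$ precisely when $c > 3$, which forces $\beta_2 \geq \beta_1 - \beta_0 + f_2 - f_1 + 1$ to be strictly positive once one controls $\beta_1$. Since the complete $1$-skeleton is connected, $\beta_0 = 1$, and the first Betti number is easily bounded above (it cannot exceed the number of independent cycles left uncancelled), so the positive quadratic term in $f_2 - f_1$ dominates.

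The technical heart is establishing that the threshold $c > 3$ for positivity of $H_2$ is exactly the Euler-characteristic threshold and that $\beta_1$ does not absorb the surplus; here I would invoke that for $Y \sim Y(n, c/n)$ with $c$ a constant, the complex is w.h.p.\ such that the rank of $\partial_2$ over $\Q$ is at most $f_1 - 1$ (the $1$-skeleton is connected so $H_0$ contributes the $-1$), whence $\beta_2 = f_2 - \mathrm{rank}\,\partial_2 \geq f_2 - f_1 + 1 > 0$ for $c > 3$. Combining this with Theorem 7 applied on the overlapping range $c/n < p < n^{-1+\epsilon}$ (which is nonempty and consistent with $p \ll n^{-46/47}$ once $\epsilon < 1/47$), asphericability of the associated $Z$ together with $H^2(Z; \Q) \cong H^2(Y; \Q) \neq 0$ yields $\mathrm{cd}(\pi_1(Y)) = \mathrm{cd}(\pi_1(Z)) = 2$.

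The step I expect to be the main obstacle is the bookkeeping that identifies $H^*(Z)$ with $H^*(Y)$ and ensures deleting one face per tetrahedron does not kill the surplus in $\beta_2$. Since the expected number of tetrahedron boundaries in $Y \sim Y(n, c/n)$ is $\Theta(1)$ (Poisson with mean $c^4/24$), only $O(1)$ faces are removed in passing from $Y$ to $Z$, so $f_2(Z) = f_2(Y) - O(1)$ and the strict inequality $\beta_2 > 0$ survives; I would make this precise by noting that removing $O(1)$ two-faces changes $\beta_2$ by at most $O(1)$, which is dominated by the $\Theta(n^2)$ gap $f_2 - f_1$. The remaining care is to confirm that Theorem 7's conclusion transfers to the constant-$p$ regime $p = c/n$, which lies comfortably inside $p \ll n^{-46/47}$, so no extension of \cite{CF} beyond the stated hypotheses is required.
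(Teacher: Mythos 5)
Your outline is essentially correct, and it is essentially the argument of Costa and Farber themselves. Note that the paper does not prove this statement at all --- it is imported verbatim as Theorem 3B of \cite{CF} --- so the only internal point of comparison is the paper's proof of the upper bound of Theorem 1, which the author explicitly describes as following ``exactly the argument of Costa and Farber'' with the Euler-characteristic step replaced by the Linial--Peled theorem (Theorem 8) to push the constant from $3$ down to $c_2$. Your reconstruction --- asphericability of $Z$ makes $Z$ a two-dimensional $K(\pi_1, 1)$, so $\mathrm{cd}(\pi_1(Y)) = \mathrm{cd}(\pi_1(Z)) = 2$ once $H^2(Z; \Q) \neq 0$, and the latter follows from $\beta_2(Y) \geq f_2 - f_1 + n - 1 = \Theta((c-3)n^2) > 0$ via rank--nullity --- is exactly where the constant $3$ comes from, and it is the right proof.

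Three points to tighten. First, your second paragraph has the inequality pointing the wrong way: you say one must bound $\beta_1$ \emph{above}, but from $\beta_0 - \beta_1 + \beta_2 = f_0 - f_1 + f_2$ one only needs the trivial lower bound $\beta_1 \geq 0$; your third paragraph's rank--nullity version gets this right, though the image of $\partial_2$ lies in $\ker \partial_1$, which has dimension $f_1 - n + 1$ (not $f_1 - 1$) since the $1$-skeleton is complete. Relatedly, $f_0 = n$, not $1$, in the Euler characteristic; harmless but worth fixing. Second, the claim that only $O(1)$ faces are deleted in passing from $Y$ to $Z$ is valid only at the bottom of the range $p = c/n$; for $p$ up to $n^{-1+\epsilon}$ the expected number of tetrahedron boundaries is $\binom{n}{4}p^4 = \Theta(n^{4\epsilon}) \to \infty$, so you should instead bound the number of deletions by, say, $n^{5\epsilon}$ via Markov and note that this is still $o(f_2 - f_1) = o(pn^3)$ because $\epsilon < 1/47$. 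Third, you silently use concentration of $f_2$ about $p\binom{n}{3}$; a one-line Chernoff bound covers it. With those repairs the argument is complete and matches the cited source.
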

To prove our upper bound from Theorem 1 we will use the following result of Linial and Peled \cite{LP} to reduce the constant 3 in Theorem 7 to $c_2$, the argument will follow exactly the argument of Costa and Farber in their proof of Theorem 7, but with the current state-of-the-art (and best-possible, also by \cite{LP}) threshold for emergence of homology in degree 2 for a random 2-complex.
\begin{theorem}[Special case of Theorem 1.3 from \cite{LP}]
Suppose $c > c_2$, then w.h.p.\ $Y \sim Y(n, c/n)$ has
$$\dim H_2(Y; \R) = \Theta(n^2).$$
\end{theorem}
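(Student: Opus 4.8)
The plan is to work with real coefficients and exploit that, since $Y$ has no $3$-cells, $H_2(Y;\R) = \ker \partial_2$ where $\partial_2 \colon C_2(Y;\R) \to C_1(Y;\R)$ is the simplicial boundary operator, the $f_1 \times f_2$ matrix whose column for a triangle has three nonzero $\pm 1$ entries in the rows of its edges. Thus
$$\dim H_2(Y;\R) = f_2 - \mathrm{rank}(\partial_2).$$
The upper bound $\dim H_2 = O(n^2)$ is immediate from $f_2 \le \binom{n}{3}p = O(n^2)$, so the entire content is the matching lower bound $\dim H_2 = \Omega(n^2)$, equivalently the statement that $\partial_2$ has rank deficiency $\Omega(n^2)$. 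I would first record that for $c > 3$ this is trivial: the Euler characteristic gives $\dim H_2 = f_2 - \binom{n}{2} + n - 1 + \dim H_1 \ge f_2 - \binom{n}{2} = \Theta(n^2)$ since $f_2 \sim c n^2/6 > n^2/2 \sim \binom{n}{2}$. The difficulty is entirely in the window $c_2 < c \le 3$, where $f_2 < \binom{n}{2}$, both $\dim H_1$ and $\dim H_2$ are $\Theta(n^2)$, and the Euler characteristic alone cannot separate them; one must genuinely control the rank of $\partial_2$. In particular the naive global $2$-cycles do not suffice, since the expected number of tetrahedron boundaries is only $\Theta(1)$, so the required $\Omega(n^2)$ cycles must be large, globally supported objects that emerge through a percolation-type transition.

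For that window I would use the objective method, analyzing $\mathrm{rank}(\partial_2)$ through the local weak (Benjamini--Schramm) limit of $Y(n,c/n)$. Rooting at a uniformly random edge, the number of triangles on that edge is asymptotically $\mathrm{Poisson}(c)$, each such triangle introduces two further edges, and iterating produces a Galton--Watson--type alternating tree of edges and triangles with Poisson offspring. The normalized corank $\frac{1}{n^2}\mathbb{E}[\,f_2 - \mathrm{rank}(\partial_2)\,]$ should then converge, by the rank-convergence results available for the incidence operators of locally tree-like sparse random matrices, to a deterministic functional of this limiting object, expressible through a recursive distributional equation for the probability that the root edge lies in the support of a $2$-cycle (equivalently, survives the iterated collapse to the core).

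The crux is the analysis of that distributional fixed-point equation. I expect the relevant survival probability to satisfy a Poisson fixed-point relation whose nontrivial root is exactly the quantity $y$ determined by $3(1-y) + (1+2y)\log y = 0$, and for the critical intensity to be precisely $c_2 = -\log y/(1-y)^2$; one then shows the limiting corank functional is strictly positive for every $c > c_2$ and vanishes below it. Matching the threshold of the combinatorial recursion to this transcendental characterization of $c_2$, and proving that the local limit genuinely computes the Betti number (i.e.\ that no $\Omega(n^2)$ of rank is hidden in global, non-local dependencies), is the main obstacle. An alternative route for the lower bound is through the giant core that emerges above threshold: every edge of the core meets at least two triangles, and one argues this forces $\Omega(n^2)$ independent $2$-cycles. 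The delicate point there is that the naive degree count $f_1(K) \le \tfrac32 f_2(K)$ is far too weak to certify any positive nullity, so it must be replaced by a genuine expansion estimate controlling $\mathrm{rank}(\partial_2)$ restricted to the core.

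Finally I would upgrade the expectation estimate to a w.h.p.\ statement by concentration. Exposing the $\binom{n}{3}$ potential triangles one at a time, adding or deleting a single triangle changes $\mathrm{rank}(\partial_2)$, and hence $\dim H_2$, by at most $1$; a bounded-difference (Azuma--McDiarmid) inequality then gives concentration of $\dim H_2$ within $o(n^2)$ of its mean, which together with the $\Omega(n^2)$ lower bound on the mean yields $\dim H_2(Y;\R) = \Theta(n^2)$ with high probability.
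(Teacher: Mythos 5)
You should note first that the paper you are being compared against does not prove this statement at all: it is Theorem~8, imported verbatim as a special case of Theorem~1.3 of \cite{LP}, and the paper uses it as a black box. So the only meaningful comparison is with the proof in \cite{LP} itself, and there your outline is, in broad strokes, a faithful reconstruction of that route: \cite{LP} does analyze the corank of $\partial_2$ through the local weak (Benjamini--Schramm) limit of $Y(n,c/n)$ rooted at an edge, which is exactly the alternating Poisson tree of edges and triangles with $\mathrm{Poisson}(c)$ offspring you describe; the threshold does come out of a fixed-point analysis whose critical point is the $y$ solving $3(1-y)+(1+2y)\log y=0$ with $c_2=-\log y/(1-y)^2$. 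Your preliminary reductions are also correct: $\dim H_2 = f_2 - \mathrm{rank}(\partial_2)$ since there are no $3$-cells, the $O(n^2)$ upper bound is immediate, the Euler-characteristic argument disposes of $c>3$ (with $\beta_0 = 1$ from the complete $1$-skeleton), and the McDiarmid step is sound, since a single triangle changes $\dim H_2$ by at most $1$ and $\sqrt{\binom{n}{3}}=o(n^2)$.

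However, as a proof the proposal has a genuine gap exactly where you flag ``the main obstacle,'' and it is not a small one. The normalized corank is the atom at $0$ of the spectral measure of $\partial_2^{*}\partial_2$ at the root, and this atom is \emph{not} continuous under local weak convergence: weak convergence of spectral measures gives only one-sided semicontinuity, and in the wrong direction for the lower bound $\dim H_2 = \Omega(n^2)$ that you correctly identify as the entire content of the theorem. Moreover, the ``rank-convergence results available for the incidence operators of locally tree-like sparse random matrices'' that you invoke are theorems about adjacency matrices of graphs (Bordenave--Lelarge--Salez style); no off-the-shelf version covers simplicial boundary operators of $2$-complexes, and supplying the analogue --- proving that the Poisson $2$-tree's corank functional genuinely lower-bounds $\liminf \frac{1}{n^2}\dim H_2$, and solving the recursive distributional equation to locate the critical $c$ --- is the bulk of the long and technical argument in \cite{LP}. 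Your sketch states both of these steps as expectations (``I expect the relevant survival probability to satisfy\dots,'' ``one then shows\dots'') rather than establishing them, and the proposed fallback via the giant core is, as you yourself concede, blocked by the weakness of the naive degree count. In short: the roadmap is the right one and matches the actual source, but the two decisive steps are asserted rather than proved, so the proposal is a research program for \cite{LP}'s theorem, not a proof of it --- which is defensible here only in the sense that the paper under review likewise treats the statement purely as a citation.
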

In $\cite{LP}$, the constant implicit in $\Theta(n^2)$ is given explicitly, but we do not need it here. We are now ready to prove the second part of theorem 1.
\begin{proof}[Proof of upper bound on Theorem 1]
Fix $c > c_2$ and suppose that $Y$ is a simplicial complex drawn from $Y(n, c/n)$. Now let $Z$ be obtained from $Y$ by removing one face from every tetrahedron boundary. With high probability $\pi_1(Y) = \pi_1(Z)$, and by theorem 6, $Z$ is aspherical. Therefore showing that $\beta_2(Z) \neq 0$ would imply that the cohomological dimension of $\pi_1(Y)$ is at least two. Now by theorem 8 we know that with high probability $\beta_2(Y) = \Theta(n^2)$. Also we have by a first moment argument that the expected number of tetrahedron boundaries is bounded above by $c^4/24$. Therefore by Markov's inequality with high probability $Y$ has no more than, say, $n$ tetrahedron boundaries. Now given a 2-dimensional simplicial complex, removing a face can drop $\beta_2$ by at most one. Therefore when we remove one face from from each tetrahedron boundary of $Y$ to obtain $Z$ we drop $\beta_2(Y)$ by at most $n$, then w.h.p.\ $\beta_2(Z) = \Theta(n^2) > 0$. Thus the cohomological dimension of $\pi_1(Y)$ is at least two (actually equality holds by theorem 6), in particular $\pi_1(Y)$ is not a free group.
\end{proof}
\section{Concluding Remarks}
The statement of Theorem 1 perhaps implicitly suggests a sharp threshold for the property that a random 2-complex has fundamental group which is not free. However, it is worth mentioning that the property that the fundamental group of a simplicial complex is free is not a monotone property, so it is not obvious at all that a sharp threshold should exist.  However, by theorem 1 and theorem 7, for $c_2/n < p < 3 \log n/n$ and $Y \sim Y(n, p)$, with high probability $\pi_1(Y)$ is not free. Combining this with a result from \cite{HKP} that for $p > (2 \log n + \omega(n))/n$ (with $\omega(n) \rightarrow \infty$ as $n \rightarrow \infty$) and $Y \sim Y(n, p)$, $\pi_1(Y)$ has property (T) with high probability, we have that for $p > c_2/n$, the fundamental group of $Y \sim Y(n, p)$ is with high probability free only if it is trivial as the only free group with property (T) is the trivial group. On the other hand, \cite{CCFK} proves that for $p = o(1/n)$, $Y \sim Y(n, p)$ collapses to a graph with high probability. Thus we do have at least a coarse threshold of $1/n$ for the fundamental group of a random 2-complex to be either not free or trivial. \\

It remains to discover the fundamental group for $Y(n, p)$ for $\gamma_2 \leq c \leq c_2$ and $p = c/n$. Right now, there does not seem to be enough evidence to establish a conjecture. There are the following three possibilites for what happens to the fundamental group of $Y(n, p)$ in this intermediate regime:
\begin{enumerate}
\item $\gamma_2/n$ is the sharp threshold for the fundamental group of $Y(n, p)$ to go from a free group to a non-free group.
\item $c_2/n$ is the sharp threshold for the fundamental group of $Y(n, p)$ to go from a free group to a non-free group.
\item Neither $\gamma_2/n$ nor $c_2/n$ is the sharp threshold for the fundamental group of $Y(n, p)$ to go from a free group to a non-free group.
\end{enumerate}
Any of these three would be interesting in their own way.\\

If (1) holds, then by \cite{CF}, $Y(n, c/n)$ in the regime $c \in (\gamma_2, c_2)$ has cohomological dimension equal to 2 and $Y(n, c/n)$ is asphericable. Thus when we remove a face from every tetrahedron we have that $Y(n, c/n)$ is a $K(G, 1)$ for a group $G$ which has cohomological dimension 2. However, the reason for it to have cohomological dimension 2 must be different than the reason for $\pi_1(Y(n, c/n))$ to have cohomological dimension 2 in the regime $c > c_2$. Indeed, in the regime $c > c_2$, $H_2(Y(n, c/n), \R) \neq 0$ is enough to imply that the cohomological dimension of the fundamental group is at least 2. For $c < c_2$ the second homology group of $Y(n, c/n)$ with coefficients in $\R$ is trivial, after the removal of a face from each tetrahedron boundary.\\

 Moreover, there is an apparent lack of torsion in $H_1(Y(n, c/n))$ if $c < c_2$. This has not been proved, but extensive experiments conducted in \cite{KLNP} provide evidence to support this, and \cite{LP2} state the following conjecture regarding torsion in homology: 
\begin{conjecture}[2-dimensional case of the conjecture from \cite{LP2}]
For every $p = p(n)$ such that $|np - c_2|$ is bounded away from 0, $H_{1}(Y_d(n, p); \Z)$ is torsion-free with high probability. 
\end{conjecture}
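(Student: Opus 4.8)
The plan is to attack the conjecture through the universal coefficient theorem, which translates torsion in $H_1$ into a rank defect of the boundary map $\partial_2$ modulo primes. Writing $\partial_2 \colon C_2 \to C_1$ for the integral boundary map of $Y$ (so that $H_2(Y;\Z) = \ker \partial_2$ is free and $H_1(Y;\Z) = \ker\partial_1/\im\partial_2$), the Smith normal form of $\partial_2$ shows that $H_1(Y;\Z)$ is torsion-free if and only if for every prime $q$ one has $\mathrm{rank}_{\mathbb{F}_q}(\partial_2) = \mathrm{rank}_{\Q}(\partial_2)$; equivalently, $\dim_{\mathbb{F}_q} H_2(Y;\mathbb{F}_q) = \dim_{\Q} H_2(Y;\Q)$ for all $q$. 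Thus it suffices to rule out, uniformly over all primes $q$, the existence of a set $S$ of $2$-faces whose boundary vectors are linearly independent over $\Q$ but linearly dependent over $\mathbb{F}_q$.

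First I would pass to a minimal such $S$. Minimality forces a single $\mathbb{F}_q$-dependency with all coefficients nonzero, and its support $K$ is then a connected subcomplex in which every edge lies in at least two faces (otherwise that edge would force a vanishing coefficient); that is, $K$ is a core. Minimality together with $\Q$-independence of the columns gives $\beta_2(K) = f_2(K) - \mathrm{rank}_\Q \partial_2^K = 0$, so $K$ is $\R$-acyclic in the top degree while $H_2(K;\mathbb{F}_q)\neq 0$. Euler-characteristic bookkeeping for a connected such core yields $f_2(K) = e(K) - v(K) + 1 - \beta_1(K)$, and combining this with the core inequality $2e(K)\le 3f_2(K)$ gives $f_2(K) \ge 2\big(v(K)-1\big)$. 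The smallest instance is a triangulation of $\mathbb{RP}^2$ with $v=6,\ f=10$, which witnesses $2$-torsion; and for fixed $v$ the bound $f_2(K)\ge 2(v-1)$ makes the expected number of such cores $O\!\big(n^{v}\,(c/n)^{2(v-1)}\big) = O(n^{2-v})$, so a first moment calculation eliminates every torsion-carrying core on a bounded (indeed slowly growing) number of vertices.

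The hard part, and the reason the conjecture remains open, is that this first moment estimate degenerates once $v(K)$ is allowed to grow linearly: the number of candidate dense subcomplexes overwhelms the probability factor, and large cores cannot be excluded by counting. Two genuinely different obstacles appear. First, torsion elements can be divisible by astronomically large primes $q$ (any prime up to a maximal minor of $\partial_2$, of size $e^{O(n^2\log n)}$), so the union bound over primes that succeeds for small cores is hopeless; one needs an argument that locates the rank defect on a minimal, hence small, core independently of the size of $q$. Second, one must rule out large ($\Theta(n^2)$-face) $\R$-acyclic cores that nonetheless carry $\mathbb{F}_q$-homology. In the subcritical regime $np < c_2 - \epsilon$ this is where the Linial--Peled analysis should enter: the structural results underlying Theorem 4 and Theorem 8 describe the local weak limit of $Y$ and pin the emergence of linear-sized homology precisely at $c_2$, and the goal would be to upgrade these to show that no large $\R$-acyclic-but-$\mathbb{F}_q$-cyclic subcomplex exists below the threshold. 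In the supercritical regime $np > c_2 + \epsilon$ the difficulty is reversed: $H_2(Y;\R) = \Theta(n^2)$ is present, and one would instead have to show that this large rational homology is spanned by integral $2$-cycles, so that $\partial_2$ keeps full rank modulo every prime.

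I expect the main obstacle to be precisely this control of medium- and large-scale cores uniformly in $q$: proving that the only mechanism for a mod-$q$ rank loss is a bounded-size configuration (an embedded $\mathbb{RP}^2$ being the prototype), whose absence the first moment already guarantees. A plausible route is an entropy or encoding bound showing that any $\R$-acyclic core with nontrivial $\mathbb{F}_q$-homology must contain a bounded-size minimal sub-witness whose face-excess is bounded away from its vertex count, thereby reducing the global problem to the local one. Making such a reduction rigorous, and separating the behavior cleanly on the two sides of $|np - c_2|$ bounded away from $0$, is where essentially all of the work lies.
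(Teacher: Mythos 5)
You have not proven the statement, and neither has anyone else: this is a \emph{conjecture} in the paper (attributed to Linial and Peled), offered without proof and supported only by the experimental evidence of Kahle--Lutz--Newman--Parsons cited alongside it. So there is no proof in the paper to compare yours against, and your proposal --- to its credit --- does not pretend to close the problem. What you have written is a correct reduction plus an accurate diagnosis of why the reduction is not enough. The reduction itself is sound: by Smith normal form, torsion in $H_1(Y;\Z)$ is equivalent to a prime $q$ and a set of $2$-faces whose boundary columns are $\Q$-independent but $\mathbb{F}_q$-dependent; a minimal such set has full support in its (unique up to scalar) dependency, so its support $K$ is a core with $\beta_2(K;\Q)=0$ and $H_2(K;\mathbb{F}_q)\neq 0$; the Euler characteristic computation $f_2(K)=e(K)-v(K)+1-\beta_1(K)$ together with $2e(K)\le 3f_2(K)$ gives $f_2(K)\ge 2(v(K)-1)$, realized by the $6$-vertex triangulation of the projective plane; and the first moment bound $O(n^{v}(c/n)^{2(v-1)})=O(n^{2-v})$ kills all such witnesses on a bounded number of vertices. (For $v$ growing with $n$ you would additionally need to control the number of candidate cores on $v$ vertices, which grows superexponentially in $v$, so even ``slowly growing'' requires more care than you indicate.)

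The genuine gap is exactly the one you name: nothing in this argument, nor in Theorems 3, 4, or 8 of the paper, rules out a torsion witness that is a core with $\Theta(n^2)$ faces, and the union bound over primes is useless there since the relevant primes can be as large as a maximal minor of $\partial_2$. Theorem 3 says small cores contain tetrahedron boundaries and Theorem 4 says the $2$-core is small below $\gamma_2$, but neither says anything about $\mathbb{F}_q$-homology versus $\Q$-homology of large cores, and above $c_2$ the complex genuinely has $\Theta(n^2)$-dimensional rational $H_2$, so one would have to show that rank is preserved modulo every prime --- a statement of a completely different character. Your proposed ``entropy or encoding bound'' reducing any global mod-$q$ rank loss to a bounded-size minimal witness is a plausible strategy but is precisely the missing mathematics; as written it is a research program, not a proof. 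The honest conclusion is that your proposal establishes the (known, and folklore) absence of torsion supported on bounded-size subcomplexes, and correctly identifies the statement as open beyond that.
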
 
Torsion in homology is observed experimentally for $p$ close to $c_d/n$. For more about this torsion see \cite{KLNP}. All of this is to say that it is likely that one will not be able to prove that the cohomological dimension of $\pi_1(Y(n, c/n)) \geq 2$ for $\gamma_2 < c < c_2$ by proving $H_2(Y(n, c/n), \Z/q\Z) \neq 0$ for some prime $q$. \\

On the other hand if (2) holds, then $Y(n, c/n)$ in the regime $\gamma_2 < c < c_2$ is homotopy equivalent to a wedge of circles after the removal of one face from each tetrahedron boundary. This follows from the fact that $Y(n, c/n)$ is asphericable and that an aspherical space which is a CW-complex is unique up to homotopy equivalence. Now for $c < \gamma_2$, as we show above following the results of \cite{ALLM}, $Y(n, c/n)$ is homotopy to a wedge of circles after the removal of a face from each tetrahedron boundary. However, this homotopy equivalence is given by a sequence of elementary collapses which reduces the complex to a graph. It is proved in \cite{AL2} that such a series of elementary collapses is not possible for $c > \gamma_2$. Furthermore, \cite{LP3} points out that in the regime $\gamma_2 < c < c_2$, $Y(n, c/n)$ is far from being 2-collapsible in the sense that a constant fraction of the faces must be deleted to arrive at a complex which is 2-collapsible. Thus $Y(n, c/n)$ in the regime $\gamma_2 < c < c_2$ if (2) holds would be homotopy equivalent to a wedge of circles, but not via the same type of homotopy equivalence which exists for smaller values of $c$. \\

In summary, regardless of whether the truth is (1) or (2), new techniques will almost certainly be required to prove which is correct. Of course, (3) is a possibility as well. Indeed it is possible that no sharp threshold exists for the property that $\pi_1(Y(n, p))$ as we discuss above. It could also be that there is some $c^* \in (\gamma_2, c_2)$ so that $c^*/n$ is the sharp threshold for $\pi_1(Y(n, p))$ to be non-free, or that within this intermediate regime there is a positive probability $Y(n, p)$ is free and a positive probability that it is not.

\bibliography{ResearchBibliography}
\bibliographystyle{amsplain}
\end{document}